\newtheorem{defn}{Definition}
\newtheorem{thm}{Theorem}
\newtheorem*{thm*}{Theorem}
\newtheorem{cor}{Corollary}
\newtheorem*{cor*}{Corollary}
\newtheorem{lemm}{Lemma}
\newtheorem{prop}{Proposition}
\newtheorem*{ex}{Example}
\renewcommand{\naa}{\nabla^{\alpha}}
\newcommand{\nab}{\nabla^{\beta}}
\def\TT{\R_{\infty}}
\newcommand{\norm}[1]{w(#1)}
\newcommand{\union}[1]{\cup \{#1\}}
\author{Eduardo Corel\footnote{E-mail : ecorel@gwdg.de}}
\title{G\'erard-Levelt membranes}
\date{}
\begin{document}

\maketitle


\begin{abstract}
We present an unexpected application of tropical convexity to the determination of invariants for linear systems of differential equations. We show that the classical G\'erard-Levelt lattice saturation procedure can be geometrically understood in terms of a projection on the tropical linear space attached to a subset of the local affine Bruhat-Tits building, that we call the {\it G\'erard-Levelt membrane}. 
This provides a way to compute the true Poincar\'e rank, but also the Katz rank of a meromorphic connection without having to perform gauge transforms nor ramifications of the variable. 
We finally present an efficient algorithm to compute this tropical projection map, generalising Ardila's method for the case of the Bergman fan to the case of the tight-span of a valuated matroid.
\end{abstract}


\tableofcontents



\section*{Introduction}

Given a meromorphic linear differential system on the Riemann sphere,
\begin{equation}
\frac{dX}{dz}=A(z)X\textrm{ with }A(z)\in \m\left(\C(z)\right),\label{sd}
\end{equation}
it is important to determine whether a
singularity of $A$ is a {\em regular} singular point for the system
(\ref{sd}). Unlike with scalar linear differential equations, for
which there is a purely algebraic condition on the orders of the poles
of the coefficients due to L. Fuchs \cite{Fu}, a system (\ref{sd}) can
display arbitrarily high pole orders at a regular singularity.

Consider the differential system, expanded in the neighbourhood of the singular
point (assumed for simplicity to be $z=0$) as follows,
where we put $\theta=z\frac{d}{dz}$, 
\begin{equation}
\theta X=\frac{1}{z^{p}}\sum_{i\gsl 0}A_iz^iX\textrm{ with }p\gsl 0
\textrm{ and }A_0\neq 0\textrm{ if }p>0.
\label{SF}
\end{equation}
The integer $p$ is traditionally known as the {\em Poincar\'e rank}
$\p(A)$ of the system. Finding the type of singularity involves
knowing the minimum value $m(A)$, sometimes known as the {\it true
  Poincar\'e rank}, of this rank under gauge transformations 
\begin{equation}
A_{[P]}=P^{-1}AP-P^{-1}\theta P\textrm{ with
}P=\sum_{k\gsl k_0}P_kz^k\in \G(K)\textrm{ where }K=\C((z)).\label{gauge}
\end{equation}

Several lines of research have been opened to tackle this problem. 
The most classical tries to iteratively construct a suitable gauge
transformation $P$, usually coefficient by coefficient in the series
expansion. Featured methods rely on the linear algebra over $\C$ involved by
equation (\ref{gauge}), like Moser and continuators
(\cite{Mo,HW}), whose methods are widely used nowadays in
computer algebra,  or other researchers such as \cite{Ju,BJL}, while \cite{BV} use Lie group
theoretic tools. 

The nature of a singularity of $A$ can also considered from point of
view of meromorphic connections \cite{De}, and especially, as a
question of stability of certain {\it lattices} under the differential
operator induced by the connection \cite{M,Ka}. We focus here
specifically on the approach of {\it saturating lattices} used by
G\'erard and Levelt \cite{GL}: the true Poincar\'e rank $m(\na)$ is
the minimum integer $k$ such that the sequence of $k$-saturated
lattices (recalled in section~\ref{GLs}) eventually stabilises.

Recent work has shown close relations between the geometric framework
of the Bruhat-Tits building of $\SL(K)$, for some discrete valued
field such as $K=\C((z))$, and tropical convexity \cite{KT,JSY,W}.
We show here that G\'erard and Levelt's approach can be formulated and
efficiently computed in this framework, as a projection on
the tropical linear space $L_p$ attached to the valuated matroid $p$
corresponding to a given {\it membrane} in the Bruhat-Tits building.

Results of Keel and Tevelev \cite{KT} show that, when lattices are in a same
membrane of the Bruhat-Tits building, they are {\it homothetic} if and
only if they are projected on the same point of a tropical linear
space by an explicit nearest point projection map (\cite{JSY}, see also
\cite{CGQ,GK}).
We need to be able to tell whether lattices are actually equal. We show
that the projection map respects the valuation, and 
projects only {\it equal} lattices on a same point (theorem \ref{KT}). 

We then construct the {\it G\'erard-Levelt membrane} that contains all
the $k$-saturated lattices (proposition~\ref{GLm}), and give a tropical version
of G\'erard-Levelt's lattice stabilisation criterion (corollary~\ref{P}). 
Third, we show that the same formula amazingly allows for the computation of a
more subtle invariant, namely the {\it Katz rank} of the
connection (theorem~\ref{K}), without having to either to compute a single gauge
transformation nor perform the usually required ramification of the
variable.

The projection map given in~\cite{JSY} has unfortunately a high complexity.
We give in section~\ref{proj} an efficient algorithm to compute this
projection map onto the linear tropical space attached to a valuated
matroid. We generalise the algorithmic approach to tropical projection developed by Ardila
\cite{A} for ordinary matroids, to the case of {\it valuated matroids}
defined by Dress and collaborators \cite{DW}. The algorithm presented here,
which computes the nearest $\ell_{\infty}$-projection on the
tight-span of a valuated matroid, has a wider applicability than the
differential computations explained in the previous parts, especially in
phylogenetics \cite{DT}.

\section*{Acknowledgements}

I have benefited from very interesting and stimulating discussions
with Federico Ardila, Josephine Yu, Michael Joswig, Annette Werner, Felipe Rinc\'on and
St\'ephane Gaubert, held at the Tropical Geometry Workshop at the CIEM
in Castro Urdiales (Spain) in December 2011.

\section{Meromorphic connections}

A {\em meromorphic connection} is a map $\appto{\na}{V\simeq
  K^n}{\om(V)=V\otimes_{K}\om^1_{\C}(K)}$ which is $\C$-linear and
satisfies the Leibniz rule $$\na(fv)=v\otimes df+f\na
v\text{ for }f \in K\text{ and }v \in V.$$ 
The {\em matrix $\mat(\na,(e))$} is given by $\na e_{j}=-\sum^n_{i=1}
e_i\otimes \Omega_{ij}$ for a basis $(e)$.  A basis change $P\in
\G(K)$ {\it gauge-transforms} the matrix of $\na$ by 
\begin{equation} \label{jauge} \Omega_{[P]}=P^{-1}\Omega P-P^{-1}dP.
\end{equation}
Contracting with $z^{k+1}\frac{d}{dz}$ yields a differential operator $\na_k$, and system (\ref{sd}) is
the expression of $\na_{-1}(v)=0$ in the basis $(e)$.

A \emph{lattice} $\la$ in $V$ is a free sub-$\h$-module
of rank $n$, that is a module of the form $$\la=\bigoplus_{i=1}^n\h
e_i\textrm{ for some basis }(e)\textrm{ of }V.$$

The \emph{Poincar\'e rank of $\na$ on the lattice
$\la$} is defined as the integer
\begin{equation}
\label{poin}
\p_{\la}(\na)=-v_{\la}(\la+\na_0(\la))=
\min\{k\in \N\tq \na_k(\la)\subset \la\}=\max_{i,j}(-v(\Omega_{ij})-1,0).
\end{equation}

The {\em true Poincar\'e rank} $m(\na)=\min_{\la} \p_{\la}(\na)$
characterises the nature of the singularity of~(\ref{sd}), in the
sense that $z=0$ is regular if and only if $m(\na)=0$.
 
\subsection{G\'erard-Levelt's saturated lattices}
\label{GLs}
For any vector $e\in V$ and any derivation $\tau\in \de$, define for
$\ell \in \N$ the family $$Z_{\tau}^{\ell}(e)=(e,\nap e,\ldots,\nap^{\ell} e).$$
The module $\h_{\tau}^{\ell}(e)$ induced over $\h$ by $Z_{\tau}^{\ell}(e)$ only depends on the
valuation $v(\tau)$ of the derivation $\tau$. We can therefore
restrict ourselves to the particular derivations
$\tau_k=z^{k+1}\ddz{}$ for $k \in \N$.
In this case, we put $\na_{\tau_k}=\na_k$, and note $Z^{\ell}_k(e)$
and $\h^{\ell}_k(e)$ the corresponding objects.
For $k\gsl 1$, G\'erard and Levelt define the
lattices $$F^{\ell}_k(\la)=\la+\na_k\la +\cdots +\na_k^{\ell}\la.$$ 
Note that $$F^{\ell}_k(\la)=\sum_{i=1}^n Z^{\ell}_k(e_i)\textrm{ for any basis
}(e_1,\ldots,e_n)\textrm{ of }\la.$$

\begin{thm}[G\'erard, Levelt]
The true Poincar\'e rank $m(\na)$ of $\na$ is
$$m(\na)=\min\{k\in\N \tq F^{n}_k(\la)=F^{n-1}_k(\la)\}\textrm{ for any
lattice }\la\subset V.$$
\end{thm}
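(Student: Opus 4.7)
The plan is to establish both inequalities linking $m(\na)$ and $k^*(\la) := \min\{k \in \N : F^n_k(\la) = F^{n-1}_k(\la)\}$; the fact that $k^*(\la)$ is independent of $\la$ will drop out since both bounds match the $\la$-free quantity $m(\na)$.

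\emph{Easy direction: $k^*(\la) \geq m(\na)$.} The first observation is that for $f \in \h$, $v \in V$ and $k \geq 0$, Leibniz gives $\na_k(fv) = z^{k+1} f' v + f \na_k v$ with $z^{k+1} f' \in \h$; a one-line induction then yields $\na_k F^\ell_k(\la) \subset F^{\ell+1}_k(\la)$ for every $\ell$. Under the hypothesis $F^n_k(\la) = F^{n-1}_k(\la)$, the lattice $\mu := F^{n-1}_k(\la)$ is therefore $\na_k$-stable, giving $\p_\mu(\na) \leq k$ and hence $m(\na) \leq k$.

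\emph{Hard direction: $k^*(\la) \leq m(\na)$.} Given a $\na_k$-stable lattice $\mu_0$ realising $m(\na) \leq k$, I would pick $b \in \N$ such that $\la \subset z^{-b}\mu_0$ and check via Leibniz that $z^{-b}\mu_0$ is again $\na_k$-stable. The ascending chain $(F^\ell_k(\la))_\ell$ is then trapped inside $z^{-b}\mu_0$; since $\h$ is Noetherian and $z^{-b}\mu_0$ is finitely generated, the chain stabilises at some $\na_k$-stable lattice $\la^* = \bigcup_\ell F^\ell_k(\la)$. The delicate point, and the main obstacle, is to show that the stabilisation already occurs at step $n-1$, i.e., $\la^* = F^{n-1}_k(\la)$.

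For this final step I would work modulo $z$. The identity $\na_k(fv) - f \na_k v = z^{k+1} f' v \in z\la^*$ (again using $k \geq 0$) shows that $\na_k$ descends to a \emph{$\C$-linear} endomorphism $\overline{\na_k}$ of $\la^*/z\la^* \simeq \C^n$. By Cayley-Hamilton applied to $\overline{\na_k}$, there exist $a_0, \ldots, a_{n-1} \in \C$ with $\overline{\na_k}^{\,n} = \sum_{j<n} a_j \overline{\na_k}^{\,j}$. Applied to the images of any $\h$-basis $(e_1, \ldots, e_n)$ of $\la$, an induction on $\ell$ shows that $\overline{\na_k^\ell e_i}$ always lies in the $\C$-span of $\{\overline{\na_k^j e_i} : j < n\}$. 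Hence $F^{n-1}_k(\la) + z\la^* = \la^*$, and Nakayama's lemma yields $F^{n-1}_k(\la) = \la^*$, closing the argument. The core difficulty is really this Cayley-Hamilton-Nakayama collapse: without the fact, specific to $k \geq 0$, that $\na_k$ reduces to a genuinely $\C$-linear operator modulo $z$, one would only know that the chain stabilises at some unspecified finite step, which would not suffice.
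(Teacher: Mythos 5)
The paper states this theorem without proof --- it is the classical criterion recalled from G\'erard--Levelt's paper [G-L], so there is no in-text argument to compare yours against. Your proof is correct and self-contained. Both directions check out: the inclusion $\nabla_k F^{\ell}_k(\Lambda)\subset F^{\ell+1}_k(\Lambda)$ follows from Leibniz because $z^{k+1}f'\in\mathcal{O}$ for $f\in\mathcal{O}$ and $k\geq 0$, which gives the easy inequality; and for the converse you correctly identify and handle the two real issues, namely (a) trapping the saturation chain inside a homothety $z^{-b}\mu_0$ of a $\nabla_k$-stable lattice (which is again stable precisely because $k\geq 0$) so that Noetherianity forces stabilisation at some $\Lambda^*=F^L_k(\Lambda)$, and (b) the collapse to step $n-1$ via the observation that $\nabla_k(fv)-f\nabla_k v=z^{k+1}f'v\in z\Lambda^*$, so $\nabla_k$ descends to a genuine $\mathbb{C}$-linear endomorphism of the $n$-dimensional space $\Lambda^*/z\Lambda^*$, to which Cayley--Hamilton applies; Nakayama then upgrades $F^{n-1}_k(\Lambda)+z\Lambda^*=\Lambda^*$ to $F^{n-1}_k(\Lambda)=\Lambda^*$. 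This is essentially the standard route to the G\'erard--Levelt criterion, and your closing remark correctly locates where the rank bound $n$ enters.
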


This means that $k\gsl m(\na)$ if and only if
$\na_k(F^{n-1}_k(\la))\subset F^{n-1}_k(\la)$ for some (equivalently,
any) lattice $\la$ in $V$, that is, that the Poincar\'e rank on
$F^{n-1}_k(\la)$ is at most $k$. Stated otherwise, finding the true
Poincar\'e rank is finding the largest lattice whose Poincar\'e rank
is bounded by its index in the following sequence
\begin{equation}
F^{n-1}_0(\la)\supset \cdots \supset F^{n-1}_{p-1}(\la) \supset \la.
\end{equation}

Let us extend this notation to multi-indices. Let $\ell\gsl
0$, and let $\alpha=(\alpha_1,\ldots,\alpha_{\ell})\in \Z^{\ell}$ be an integer
multi-index of length $\abs{\alpha}=\ell$ and {\em
weight} $\norm{\alpha}=\alpha_1+\cdots +\alpha_{\ell}$. Let us define also
the partial multi-indices $\alpha_{\vert j}=(\alpha_1,\ldots,\alpha_j)$
and $$\naa=\na_{\alpha_{\ell}}\circ \cdots \circ \na_{\alpha_1}.$$
Let by convention $\alpha_{\vert 0}=\epsilon$ and
$\na^{\epsilon}=\idd_V$ for the empty sequence $\epsilon$. 
Let finally $\h^{\alpha}(e)$ be the $\h$-module spanned by the sequence
$$Z^{\alpha}(e)=(\na^{\alpha_{\vert j}}e)_{0\lsl j\lsl\abs{\alpha}} .$$

\begin{lemm}
\label{chder}
For any $\alpha=(\alpha_1,\ldots,\alpha_{\ell})\in \Z^{\ell}$, one has 
$$\naa=z^{\norm{\alpha}}P_{\alpha}(\na_0)\textrm{ where
}P_{\alpha}(X)=X(X+\alpha_1)\cdots(X+\norm{\alpha_{\vert \ell-1}})\in \Z[X].$$ 
\end{lemm}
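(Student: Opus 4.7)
The plan is to proceed by induction on the length $\ell=\abs{\alpha}$ of the multi-index. The base case $\ell=1$ reduces to the $K$-linearity of the contraction $\tau\mapsto\na_\tau$ applied to $\tau_{\alpha_1}=z^{\alpha_1}\tau_0$, giving $\na_{\alpha_1}=z^{\alpha_1}\na_0$, which agrees with $P_{(\alpha_1)}(X)=X$.

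The one nontrivial ingredient is the commutation identity
$$\na_0\circ z^N=z^N(\na_0+N)\qquad(N\in\Z),$$
viewed as $K$-linear operators on $V$. This is a direct application of the Leibniz rule defining a connection, using $\na_0=\na_{\tau_0}$ with $\tau_0=\theta=z\frac{d}{dz}$ and $\theta(z^N)=Nz^N$. Granted this, the induction step amounts to writing
$$\naa=\na_{\alpha_\ell}\circ\na^{\alpha_{\vert\ell-1}}=z^{\alpha_\ell}\na_0\circ z^{\norm{\alpha_{\vert\ell-1}}}P_{\alpha_{\vert\ell-1}}(\na_0),$$
then pushing the middle $z^{\norm{\alpha_{\vert\ell-1}}}$ past $\na_0$ with a single application of the commutation identity, and merging powers of $z$. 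This produces the recurrence $P_\alpha(X)=(X+\norm{\alpha_{\vert\ell-1}})P_{\alpha_{\vert\ell-1}}(X)$, which unwinds to the stated factorisation.

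There is no serious obstacle: the statement is essentially the first bookkeeping lemma one proves before using this multi-index notation. The only subtlety worth flagging is the order of composition. Since $\naa=\na_{\alpha_\ell}\circ\cdots\circ\na_{\alpha_1}$ applies $\na_{\alpha_1}$ \emph{first}, the commutation rule must be applied in such a way that each new linear factor $X+\norm{\alpha_{\vert j}}$ is inserted on the \emph{left} of the polynomial built so far. This is precisely what produces the asymmetric sequence of partial weights $0,\alpha_1,\alpha_1+\alpha_2,\ldots,\norm{\alpha_{\vert\ell-1}}$ appearing in the product, rather than the mirror version one would get from the opposite composition convention.
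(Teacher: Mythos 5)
Your proof is correct and follows essentially the same route as the paper: induction on $\abs{\alpha}$, with the Leibniz rule supplying the commutation $\na_0\circ z^N=z^N(\na_0+N)$ that drives the recurrence. The only cosmetic difference is that you insert the new factor $X+\norm{\alpha_{\vert \ell-1}}$ on the left while the paper, after invoking the commutativity of $P_{\alpha_{\vert\ell-1}}(\na_0)$ with $\na_0$, writes it on the right; since $\Z[X]$ is commutative this is immaterial.
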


\begin{proof}
The proof goes by induction on the length of the multi-index $\alpha$.
Let $D=\na_0$. The claim obviously holds for a multi-index of length
0, with $P_{\epsilon}=1$, so assume that there exists $P_{\alpha}\in \Z[X]$ such that
$\naa=z^{w(\alpha)}P_{\alpha}(D)$ for $\abs{\alpha}\lsl \ell$. Let
$\beta\in \Z^{\ell+1}$. Then by definition, we have 
\begin{eqnarray*}
\nab&=& z^{\beta_{\ell+1}}D\circ\na^{\beta_{\vert \ell}}\\
           &=& z^{\beta_{\ell+1}}D\circ (z^{\norm{\beta_{\vert
           \ell}}}P_{\beta_{\vert \ell}}(D))\\
           &=& z^{\beta_{\ell+1}} \left(\norm{\beta_{\vert \ell}}
           z^{\norm{\beta_{\vert \ell}}} P_{\beta_{\vert
           \ell}}(D)+z^{\norm{\beta_{\vert \ell}}} D\circ P_{\beta_{\vert
           \ell}}(D)\right)\\
           &=& z^{\norm{\beta}} \left(\norm{\beta_{\vert \ell}} P_{\beta_{\vert
           \ell}}(D)+P_{\beta_{\vert \ell}}(D)\circ D\right)\\
           &=& z^{\norm{\beta}} \left(P_{\beta_{\vert \ell}}(D)\circ (D+\norm{\beta_{\vert \ell}})\right).
\end{eqnarray*}
Indeed, $P_{\beta_{\vert \ell}}(D)$ commutes with $D$ since it has by
assumption constant coefficients. The result follows, since we have
then $P_{\beta}(X)=P_{\beta_{\vert \ell}}(X)(X+\norm{\beta_{\vert \ell}})$.
\end{proof}

\begin{lemm}
\label{rgo}
Let $\la$ be a lattice in $V$. For any $\ell\in \N$ and $\alpha\in \N^{\ell}$, the $\h$-module
$\h^{\alpha}(e)$ is spanned over $\h$ by the family $$\left(e,z^{\alpha_1}\na_0 e,\ldots
,z^{\norm{\alpha_{\vert \ell-1}}}\na_0^{\ell-1} e\right).$$ 
\end{lemm}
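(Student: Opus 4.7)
The plan is to exhibit an $\h$-linear change of basis between $Z^{\alpha}(e)$ and the proposed family $(z^{\norm{\alpha_{\vert i}}}\na_0^i e)_i$, by showing that the transition matrix is lower-triangular with unit diagonal and has all its entries in $\h$. The main input is Lemma~\ref{chder}; the hypothesis $\alpha \in \N^{\ell}$ (rather than $\Z^{\ell}$) will be the crucial point that keeps the change of basis over $\h$ rather than only over $K$.

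First, I would apply Lemma~\ref{chder} to each generator $\na^{\alpha_{\vert j}}e$ of $Z^{\alpha}(e)$. Writing the monic polynomial $P_{\alpha_{\vert j}}(X) = \sum_{i=0}^{j} p_{j,i}X^i \in \Z[X]$ with $p_{j,j}=1$, I get
\[
\na^{\alpha_{\vert j}} e \;=\; z^{\norm{\alpha_{\vert j}}} P_{\alpha_{\vert j}}(\na_0)\, e \;=\; \sum_{i=0}^{j} p_{j,i}\, z^{\norm{\alpha_{\vert j}}-\norm{\alpha_{\vert i}}} \cdot \bigl(z^{\norm{\alpha_{\vert i}}}\na_0^i e\bigr).
\]
This expresses each element of $Z^{\alpha}(e)$ as a formal combination of the proposed family, with a leading coefficient $1$ for the $j$-th element of the new family.

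The crux is to check that every coefficient $p_{j,i}\, z^{\norm{\alpha_{\vert j}}-\norm{\alpha_{\vert i}}}$ actually belongs to $\h$. The coefficients $p_{j,i}$ are already in $\Z \subset \h$, so only the power of $z$ matters, and this is exactly where $\alpha \in \N^{\ell}$ enters: since every $\alpha_k \ge 0$, the partial weights $\norm{\alpha_{\vert j}}$ are nondecreasing in $j$, hence $\norm{\alpha_{\vert j}}-\norm{\alpha_{\vert i}} \ge 0$ for all $i \le j$. Without this positivity, the powers of $z$ could be negative and the relation would only hold in $K$.

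Consequently, the transition matrix from $(z^{\norm{\alpha_{\vert i}}}\na_0^i e)_i$ to $Z^{\alpha}(e)$ is lower-triangular with ones on the diagonal and entries in $\h$; its inverse is of the same form, so the two families generate the same $\h$-submodule of $V$. I do not expect any serious obstacle beyond correctly tracking the positivity of the $\alpha_k$, which is the sole place where a naive computation over $K$ could fail to land in $\h$.
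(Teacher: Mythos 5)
Your proof is correct and takes essentially the same route as the paper: the paper packages your coefficient-by-coefficient computation as the conjugation $z^{-W_{\alpha}}Az^{W_{\alpha}}$ of the unipotent integer matrix $A$ furnished by Lemma~\ref{chder}, whose entries $A_{ij}z^{\norm{\alpha_{\vert j}}-\norm{\alpha_{\vert i}}}$ lie in $\h$ for exactly the reason you identify, namely the non-negativity of the partial weights when $\alpha\in\N^{\ell}$. There is no gap.
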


\begin{proof}
According to lemma \ref{chder}, the family $Z_{\alpha}^{\ell-1}(e)$ is related to
$Z_0^{\ell-1}(e)$ by the matrix $P=Az^{W_{\alpha}}$ where
$$W_{\alpha}=\diag(0,\alpha_1,\ldots,\norm{\alpha_{\vert \ell-1}}),$$
and $A$ is an upper triangular integer matrix with diagonal entries equal to $1$,
therefore $A\in \SL_{\ell}(\Z)\subset \mathrm{GL}_{\ell}(\h)$. The families
$z^{W_{\alpha}}Z_0^{\ell-1}(e)$ and $Z^{\ell-1}_{\alpha}(e)$ are related by the matrix
$$\tP=z^{-W_{\alpha}}P=z^{-W_{\alpha}}Az^{W_{\alpha}}\textrm{ whose entries
are }A_{ij}z^{\norm{\alpha_{\vert j}}-\norm{\alpha_{\vert i}}}.$$  Since
$A$ is upper triangular, and the partial sums $\alpha_i+\cdots
+\alpha_j$ are non-negative, the matrix $\tP$ is in $\G(\h)$, and therefore both
families span the same $\h$-module.
\end{proof}

\section{Tropical convexity and lattices}
\label{ltrop}

Let $M=\{d_1,\ldots,d_m\}$ be lines in $V$ such that
$d_1+\cdots +d_m =V$, and consider the subset of $\ra$ defined by
 $$[M]=\{\ell_1+\cdots+\ell_m \tq \ell_i\textrm{ is a lattice in }d_i\}.$$
Following Keel and Tevelev, who call in \cite{KT} the set induced by
$[M]$ modulo homothety a membrane, we call this the {\em affine
  membrane spanned by $M$}.

For a choice $\A=(v_1,\ldots,v_m)$ of non-zero vectors in the lines $d_i$,
any lattice in the membrane defined by $M=\{d_1,\ldots,d_m\}$
can be represented (non uniquely) by an integer valued point as
follows: a lattice point $u\in \Z^m$ corresponds to the lattice 
\begin{equation}
\la_u=\sum_{i=1}^m \h z^{-u_i}v_i.\label{rep}
\end{equation}

Membranes spanned by $m$ lines in the Bruhat-Tits building have a faithful
representation as tropical linear spaces in $m$-dimensional space.

Let $(\TT=\R\cup\{\infty\},\oplus,\odot)$ be the tropical semialgebra,
where the operations are $$x\oplus y=\min(x,y)\textrm{ and }x\odot
y=x+y\textrm{ for }x,y\in \TT.$$
An affine membrane $M$ and a basis $(e)$ of $V$ determine a {\em valuated matroid}
$$\mapping{p}{[m]^n }{\TT}{\omega}{v(\det_{(e)} M_{\omega})}$$
where $M_{\omega}=(v_{\omega_1},\ldots,v_{\omega_n})$ is the subfamily
of vectors of $M$ indexed by $\omega$. To a valuated matroid $p$ of rank $n$ over $[m]$ there is a tropical
linear space $L_p\subset \TT^m$ attached as follows
\begin{equation}
L_p=\left\{x\in \TT^m \tq \forall \tau\in
{{[m]}\choose{n+1}},\:\min_{1\lsl i\lsl n+1}p(\tau\minus{\tau_i})+x_{\tau_i}\textrm{ is
  attained twice}\right\}.\label{lp}
\end{equation}

Depending on the authors, $L_p$ is said to be a {\it tropical convex cone}
(\cite{CGQ}) or a {\it convex polytope} (\cite{JSY}) in $\TT^m$. Both
definitions mean that 
$$\lb\odot u\oplus \mu\odot v\in L_p\textrm{ for any
}\lb,\mu\in\TT\textrm{ and }u,v\in L_p.$$ According to ~\cite{CGQ,GK,JSY}, for
$x\in \TT^m$, the formula  
\begin{equation}
\pi_{L_p}(x)=\min\{w\in L_p \tq w\gsl x\}\textrm{ where the minimum
  is taken coordinate-wise},\label{projdef}
\end{equation} defines the {\it nearest point
  projection map} $\appto{\pi_{L_p}}{\TT^m}{L_p}$. 
There are at least two other known ways to characterise or compute
$\pi_{L_p}(x)$.

\paragraph{Blue Rule.}
Adapting \cite{A}, the authors of \cite{JSY} show that
$\pi_{L_p}(x)=(w_1,\ldots,w_m)$ with
$$w_i=\min_{\sg\in{{[m]}\choose{n-1}}}\max_{j\neq
  \sg}(p(\sg\union{i})-p(\sg\union{j})+x_j).$$

\paragraph{Red Rule.}
Similarly, starting with $v=(0,\ldots,0)\in \R^m$, for every
$\tau\in{{[m]}\choose{n+1}}$ such that $\alpha=\min_{1\lsl i\lsl
  n+1}p(\tau\minus{\tau_i})+x_{\tau_i}$ is only attained once, say at
$\tau_i$, compute $\gamma=\beta-\alpha$ where $\beta$ is the second
smallest number in that collection, and put
$v_{\tau_i}:=\max(v_{\tau_i},\gamma)$. 
Then $\pi_{L_p}(x)=x+v$.

\begin{thm}[Keel-Tevelev]\label{KT}
The nearest point projection map $\appto{\pi_{L_p}}{\TT^m}{L_p}$
induces a bijection $\Psi_M$ between $[M]$ and the lattice points in
$L_p$
$$\Psi_M(\la_u) = \pi_{L_p}(u_1,\ldots,u_m).$$ 
\end{thm}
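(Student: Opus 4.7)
My plan is to establish the theorem in three steps by exploiting the Red Rule characterisation of $\pi_{L_p}$ together with the Cramer-type identities relating the valuated matroid $p$ to the $K$-linear dependences among the vectors $v_i$.

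First, I would show that every Red Rule operation preserves the lattice $\la_u$. When the rule increases $u_{\tau_j}$ by $\gamma = \beta - \alpha$ because $\alpha = p(\tau\setminus\tau_j) + u_{\tau_j}$ is the strict minimum of $\{p(\tau\setminus\tau_i) + u_{\tau_i}\}_i$ for some $\tau \in \binom{[m]}{n+1}$, the Cramer relation $\sum_i (-1)^i \det_{(e)} M_{\tau\setminus\tau_i}\, v_{\tau_i} = 0$, whose coefficients have valuations exactly $p(\tau\setminus\tau_i)$, can be solved for $v_{\tau_j}$ and multiplied by $z^{-u_{\tau_j}-\gamma}$. The resulting coefficient of $z^{-u_{\tau_i}} v_{\tau_i}$ has valuation $p(\tau\setminus\tau_i) + u_{\tau_i} - \alpha - \gamma = p(\tau\setminus\tau_i) + u_{\tau_i} - \beta \geq 0$ by definition of $\beta$ as the second minimum, so $z^{-u_{\tau_j}-\gamma} v_{\tau_j}$ lies in $\sum_{i \neq j} \h z^{-u_{\tau_i}} v_{\tau_i}$. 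Hence each step preserves $\la_u$, and iteration yields $\la_u = \la_{\pi_{L_p}(u)}$ for every $u \in \Z^m$.

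Second, I would prove the key coordinate-recovery lemma: for $w \in L_p \cap \Z^m$, one has $w_i = \max\{k \in \Z \tq z^{-k} v_i \in \la_w\}$. The inequality $\leq$ is immediate from $z^{-w_i} v_i \in \la_w$. For the reverse, assume $z^{-w_i-1} v_i \in \la_w$; rewriting this membership yields a non-trivial $K$-linear relation $\sum_k c_k z^{-w_k} v_k = 0$ with $v(c_i) = -1$ and $v(c_k) \geq 0$ for $k \neq i$. I would then use the valuated exchange (circuit) axiom for $p$ to extract a circuit $\tau \ni i$ whose Cramer relation, rescaled to normalise the $i$-th coefficient, forces $p(\tau\setminus i) + w_i + 1 \leq p(\tau\setminus k) + w_k$ for every $k \in \tau\setminus\{i\}$. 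This makes $p(\tau\setminus i) + w_i$ the strict minimum with integer gap at least $1$, so the Red Rule would increase $w_i$, contradicting $w = \pi_{L_p}(w)$.

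The two steps combine to finish the proof. Well-definedness: if $\la_u = \la_{u'}$, then by the first step $\pi_{L_p}(u)$ and $\pi_{L_p}(u')$ both lie in $L_p \cap \Z^m$ and index the same lattice, so the lemma forces their equality coordinate-wise. Injectivity is dual: $\pi_{L_p}(u) = \pi_{L_p}(u')$ implies $\la_u = \la_{\pi_{L_p}(u)} = \la_{\pi_{L_p}(u')} = \la_{u'}$. Surjectivity is immediate since $\pi_{L_p}(w) = w$ for $w \in L_p \cap \Z^m$, whence $w = \Psi_M(\la_w)$. The main obstacle is the extraction of a single circuit witness in the second step: an arbitrary $\h$-combination expressing $z^{-w_i-1} v_i$ in terms of the generators may a priori mix contributions from several circuits whose valuations cancel, and reducing the analysis to one ``critical'' circuit is where the valuated matroid axioms (and not merely the underlying combinatorial matroid) are essential.
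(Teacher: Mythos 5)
Your argument is a genuinely different, self-contained route: the paper does not reprove the hard part, but quotes Keel--Tevelev (Th.~4.17) and \cite{JSY} (Th.~18) to get $\pi_{L_p}(u)_i=v_{\la_u}(v_i)+\alpha$ for some constant $\alpha$, then pins down $\alpha=0$ from the minimality in~(\ref{projdef}); both directions of the bijection then drop out of the resulting explicit formula $\pi_{L_p}(u)=(v_{\la_u}(v_1),\ldots,v_{\la_u}(v_m))$. Your Step 1 is correct and complete as far as it goes: for each violated $\tau$ the Cramer relation $\sum_i\pm\det M_{\tau\minus{\tau_i}}\,v_{\tau_i}=0$ does show $z^{-u_{\tau_j}-\gamma}v_{\tau_j}\in\sum_{i\neq j}\h z^{-u_{\tau_i}}v_{\tau_i}$, since the relevant coefficient valuations are $p(\tau\minus{\tau_i})+u_{\tau_i}-\beta\gsl 0$; combined with $u+v\gsl u$ this gives $\la_{u+v}=\la_u$. (It relies on the quoted correctness of the Red Rule, but the paper quotes that as well.)

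The gap you flag in Step 2 is real, and it is precisely the crux of the cited Keel--Tevelev/JSY theorem, so as written the proposal does not yet prove the statement. It is, however, closable by one specific device you do not name: express $z^{-w_i-1}v_i$ not through an arbitrary $\h$-combination of the generators, but over an $\h$-basis of $\la_w$ extracted from them. Concretely, choose $\sigma\in{{[m]}\choose{n}}$ minimising $p(\sigma)-\sum_{k\in\sigma}w_k$; Cramer expansion plus this minimality shows that $(z^{-w_k}v_k)_{k\in\sigma}$ already generates $\la_w$, hence is an $\h$-basis of it. If $i\in\sigma$, the membership $z^{-w_i-1}v_i\in\la_w$ is immediately absurd; if $i\notin\sigma$, the unique expansion $v_i=\sum_{k\in\sigma}c_kv_k$ with $v(c_k)=p(\sigma\union{i}\minus{k})-p(\sigma)$ converts that membership into $p(\sigma\union{i}\minus{k})+w_k\gsl p(\sigma)+w_i+1$ for all $k\in\sigma$, i.e.\ the minimum in~(\ref{lp}) over $\tau=\sigma\union{i}$ is attained only at $i$, contradicting $w\in L_p$. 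This minimal-basis mechanism is exactly what powers Propositions~\ref{r} and~\ref{ar} later in the paper; without it (or an equivalent), the reduction to a single critical circuit --- and with it your injectivity and well-definedness arguments --- is missing.
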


In particular,
$M_u=(z^{-u_1}v_1,\ldots,z^{-u_m}v_m)$ and
$M_{\pr{u}}=(z^{-\pr{u}_1}v_1,\ldots,z^{-\pr{u}_m}v_m)$ span the same
lattice $\la$ if and only if
$\pi_{L_p}(u)=\pi_{L_p}(\pr{u})$.

\begin{proof}
Let $\la=\sum_{i=1}^m \h z^{-u_i}v_i$, and let
$w=\pi_{L_p}(u)$. According to \cite{KT}, th. 4.17 (see also \cite{JSY} th. 18), there
exists $\alpha\in\R$ such that $w_i=v_{\la}(v_i)+\alpha$ for all $1\lsl i\lsl m$.
By definition, $v_{\la}(x)=\max\{k\in\Z\tq x\in
z^k\la\}$. Accordingly, we have $z^{-w_i}v_i\in z^{-\alpha}\la$, and
thus $\alpha \gsl 0$. By formula (\ref{projdef}), we get $\alpha=0$
and thus
\begin{equation}
\pi_{L_p}(u)=(v_{\la}(v_1),\ldots,v_{\la}(v_m)).\label{c}
\end{equation}
By construction, if $u'\gsl u$, then
$z^{-u_i}v_i=z^{(u'_i-u_i)}z^{-u'_i}v_i\in \h z^{-u'_i}v_i$, for
$1\lsl i\lsl m$, hence $\la\subset \la_{u'}$. Since in particular
$w\gsl u$ holds, we get $\la\subset \la_w$. Conversely, we have
$\la_w=\sum_{i=1}^m \h z^{-w_i}v_i\subset \la$. Therefore, if 
$\pi_{L_p}(u)=\pi_{L_p}(u')$ then $\la_u=\la_{u'}$. The converse
follows directly from (\ref{c}).
\end{proof}

\section{The G\'erard-Levelt membranes}
\label{ltrop}

\begin{prop}
\label{GLm}
Fix a basis $(e)$ of $\la$, and $\ell\gsl 0$. Let $[M_{\ell}]$ be the
  membrane spanned by the vectors $(\na_0^{j} e_i)_{1\lsl i \lsl 
  n,0\lsl j \lsl \ell}$. Then $F^{\ell'}_k(\la)\in [M_{\ell}]$ for all
  $k\gsl 0$ and $\ell'\lsl \ell$.
\end{prop}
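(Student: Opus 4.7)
The plan is to use Lemma~\ref{rgo} to rewrite $F^{\ell'}_k(\la)$ as an explicit sum of rank-one $\h$-modules, each sitting in one of the lines $d_{ij}=K\cdot\na_0^j e_i$ that span the membrane $[M_{\ell}]$, and then to pad the decomposition with additional rank-one lattices in the remaining lines so as to obtain a genuine presentation witnessing membership in $[M_\ell]$.

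First, I would use that $(e_1,\ldots,e_n)$ is a basis of $\la$, so that $F^{\ell'}_k(\la)=\sum_i\h^{\ell'}_k(e_i)$, as already observed in section~\ref{GLs}. Applying Lemma~\ref{rgo} to the constant multi-index $\alpha=(k,\ldots,k)$ of appropriate length yields
\[
\h^{\ell'}_k(e_i)=\sum_{j=0}^{\ell'}\h\cdot z^{jk}\,\na_0^j e_i,
\]
and hence $F^{\ell'}_k(\la)=\sum_{i=1}^{n}\sum_{j=0}^{\ell'}\h\cdot z^{jk}\,\na_0^j e_i$. This already displays $F^{\ell'}_k(\la)$ as a sum of rank-one lattices, with exactly one summand in each line $d_{ij}$ for $0\lsl j\lsl \ell'$.

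To cover the lines $d_{ij}$ with $\ell'<j\lsl\ell$ that appear in $M_\ell$ but not yet in the decomposition above, I would invoke the fact that $F^{\ell'}_k(\la)\supset\la$ is a lattice of full rank $n$ in $V$. For each such pair $(i,j)$ one can therefore pick $N_{ij}\in\Z$ with $z^{N_{ij}}\na_0^j e_i\in F^{\ell'}_k(\la)$, so that the rank-one $\h$-module $\h\cdot z^{N_{ij}}\na_0^j e_i\subset d_{ij}$ is already contained in $F^{\ell'}_k(\la)$. Adding these redundant summands leaves the total module unchanged and yields the required decomposition of $F^{\ell'}_k(\la)$ as an element of $[M_\ell]$. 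The membrane's spanning condition $\sum_{i,j}d_{ij}=V$ is automatic, because the sub-collection $d_{i,0}=Ke_i$ already spans $V$.

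The only slightly delicate point is this padding step: one has to exhibit an honest rank-one lattice in each surplus line $d_{ij}$ that sits inside $F^{\ell'}_k(\la)$, which is a direct consequence of the full-rank property. Beyond that, the argument is a routine translation from the operator identity provided by Lemma~\ref{chder}/Lemma~\ref{rgo} into a statement about generators of $\h$-modules.
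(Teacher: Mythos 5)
Your proof is correct and follows essentially the same route as the paper: decompose $F^{\ell'}_k(\la)=\sum_i\h^{\ell'}_k(e_i)$ via Lemma~\ref{rgo} into rank-one summands $\h z^{jk}\na_0^j e_i$ for $j\lsl\ell'$, then pad the remaining lines with redundant generators already contained in the lattice (the paper takes $N_{ij}=-v_{\la}(\na_0^j e_i)$ explicitly, which is exactly your full-rank observation).
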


\begin{proof}
For the considered basis $(e)$, the lattice $L=F^{\ell'}_k(\la)$ satisfies
$$L=\h^{\alpha}(e_1)+\cdots+\h^{\alpha}(e_n)\textrm{ with }\alpha=(0,k,\ldots,k\ell').$$
Reordering terms as $(e_1,\ldots,e_n,\na_0 e_1,\ldots\na_0
e_n,\ldots,\na_0^{\ell'} e_n)$, formula~(\ref{rep}) and lemma~\ref{rgo} imply that $L$
can be represented in the membrane $[M_{\ell'}]$ by the lattice point 
$$(\underbrace{0,\ldots,0}_{n\textrm{
    times}},\underbrace{-k,\ldots,-k}_{n\textrm{
    times}},\ldots,\underbrace{-k\ell',\ldots,-k\ell'}_{n\textrm{
    times}}).$$
Since by definition, $z^{-v_{\la}(v)}v\in \la$ holds for any $v\in V$,
    the module $L$ can also be represented as an element of the membrane
$[M_{\ell}]$ by $$(\underbrace{0,\ldots,0}_{n\textrm{ times}},
\underbrace{-k,\ldots,-k}_{n\textrm{ times}},\ldots,\underbrace{-k\ell',\ldots,-k\ell'}_{n\textrm{ times}},
v_{\la}(\na_0^{\ell'+1} e_1),\ldots,v_{\la}(\na_0^{\ell} e_n)).$$
\end{proof}

The lattices $F^{\ell}_k(\la)$ for $0\lsl \ell\lsl n$ can therefore all be seen
as elements of the same membrane $[M_n]$.

\begin{defn}
$\M_{\la}=[M_n]$ is called the {\emph{G\'erard-Levelt
  membrane}} attached to $\la$.
For any basis $(e)$, the lattice $F^{\ell}_k(\la)$ is represented by
the lattice point
$$u^{\ell}_k=(\underbrace{0,\ldots,0}_{n\textrm{ times}},
\underbrace{-k,\ldots,-k}_{n\textrm{ times}},\ldots,\underbrace{-k\ell,\ldots,-k\ell}_{n\textrm{ times}},
v_{\la}(\na_0^{\ell+1} e_1),\ldots,v_{\la}(\na_0^{n} e_n)).$$
\end{defn}

If $\mat(\na_0,(e))=A$ for an basis $(e)$ of $\la$, then
$\M_{\la}$ is described in $(e)$ by the $n\times n(n+1)$
matrix 
\begin{equation}
{\bf M}=\left(\begin{array}{ccccc}I_n & A & \cdots & A_n
\end{array}\right)\textrm{ where }A_{k+1}=(z\frac{d}{dz}+A)A_k\textrm{
  and }A_0=I_n.\label{iter}
\end{equation}

The tropical projection $\pi_{\la}$ onto the tropical linear space
$L_{\la}$ attached to the G\'erard-Levelt
membrane $\M_{\la}$ maps a point $u$ to a {\it unique}
representative. Checking if $k\gsl m(\na)$ requires to know if the lattice points
$u^{n-1}_k$ and $u^{n}_k$ represent the same lattice, that is
$$\pi_{\la}(u^{n}_k)=\pi_{\la}(u^{n-1}_k).$$

\begin{cor}\label{P}
For any  $\la$, we have
$m(\na)=\min\{k\in \N\tq \pi_{\la}(u^{n}_k)=\pi_{\la}(u^{n-1}_k)\}$.
\end{cor}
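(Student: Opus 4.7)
The plan is to show that this corollary is essentially a direct translation of the Gérard--Levelt criterion $m(\na)=\min\{k\in\N\tq F_k^n(\la)=F_k^{n-1}(\la)\}$ into the tropical language set up in the preceding sections, using Proposition~\ref{GLm} to place everything inside a single membrane and Theorem~\ref{KT} to convert the lattice equality into the equality of tropical projections.

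First, I would fix a basis $(e)$ of $\la$ and invoke Proposition~\ref{GLm}, which ensures that for every $k\gsl 0$ both lattices $F_k^{n-1}(\la)$ and $F_k^{n}(\la)$ lie in the common G\'erard--Levelt membrane $\M_\la=[M_n]$. The definition of $\M_\la$ then gives the explicit integer representatives $u_k^{n-1}$ and $u_k^{n}$ constructed in the statement, so that $F_k^{n-1}(\la)=\la_{u_k^{n-1}}$ and $F_k^{n}(\la)=\la_{u_k^{n}}$ in the notation of formula~(\ref{rep}).

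Next, I would apply Theorem~\ref{KT} to the membrane $\M_\la$ with its attached tropical linear space $L_\la$ and projection $\pi_\la$. The theorem asserts that the nearest-point projection descends to a bijection between elements of $[M_n]$ and lattice points of $L_\la$. In particular, two lattice points in the membrane encode the same lattice if and only if they have the same image under $\pi_\la$. Applied to our two representatives, this gives the equivalence
\[
F_k^{n}(\la)=F_k^{n-1}(\la)\ \Longleftrightarrow\ \pi_\la(u_k^{n})=\pi_\la(u_k^{n-1}).
\]

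Finally, taking the minimum over $k\in\N$ on both sides and using the Gérard--Levelt theorem recalled in section~\ref{GLs}, I obtain $m(\na)=\min\{k\in\N\tq \pi_\la(u_k^{n})=\pi_\la(u_k^{n-1})\}$. The only subtlety worth writing out is the verification that the specific representatives $u_k^n$ and $u_k^{n-1}$ prescribed in the definition of $\M_\la$ are indeed produced by the construction of Proposition~\ref{GLm} when one ``pads'' the shorter saturation so that both live in $[M_n]$; this is where the coordinates $v_\la(\na_0^j e_i)$ enter, and it is a direct consequence of the padding recipe already used in the proof of that proposition, so no new obstacle arises.
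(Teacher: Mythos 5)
Your argument is correct and is exactly the route the paper intends: the corollary is stated without a separate proof precisely because it follows from combining the G\'erard--Levelt theorem, Proposition~\ref{GLm} (which places $F_k^{n-1}(\la)$ and $F_k^{n}(\la)$ in $\M_\la$ with the representatives $u_k^{n-1}$, $u_k^{n}$), and Theorem~\ref{KT} (equality of lattices if and only if equality of projections). No gaps.
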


\subsection{Tropical computation of the Katz rank}
\label{ltrop}

The tropical setting is compatible with the ramification of the
variable. This implies the following result.

\begin{thm}
\label{K}
Let $\appto{\pi_{\la}}{\M_{\la}}{L_{\la}}$ be the tropical nearest point projection map
of the G\'erard-Levelt membrane $\M_{\la}$ of any lattice $\la$ onto its
attached tropical linear space $L$. Then the Katz rank
$\ka(\na)$ of the connection $\na$ satisfies
$$\ka(\na)=\min\{k\in\R^+\tq \pi_{\la}(u^{n}_k)=\pi_{\la}(u^{n-1}_k)\}\textrm{ for
    any lattice }\la.$$
\end{thm}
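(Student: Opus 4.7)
The plan is to reduce to Corollary~\ref{P} applied to a ramified connection, then transport the ramification scaling back through the tropical projection using its homogeneity. Throughout I rely on the standard ramification characterization of the Katz rank: if $\tilde{\na}$ denotes the pullback of $\na$ along $z=t^r$, then $\ka(\na)=m(\tilde{\na})/r$ for every positive integer $r$ divisible enough that $r\ka(\na)\in\N$.

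I would first track how the G\'erard-Levelt data rescale under such a ramification. The identity $\theta_t=r\theta_z$ gives $\tilde{\na}_0=r\na_0$, hence $\tilde{\na}_0^j e_i=r^j\na_0^j e_i$, so the lines spanning $\M_{\tilde{\la}}$ coincide with those of $\M_{\la}$. Since the valuation satisfies $\tilde{v}=rv$, the valuated matroid attached to $\M_{\tilde{\la}}$ is $\tilde{p}=rp$, while the Poincar\'e index $k'$ on the $t$-side corresponds to the index $k=k'/r$ on the $z$-side via $\tilde{\na}_{k'}=r\na_{k'/r}$. Combining these two factors of $r$, the representative of $F^\ell_{k'}(\tilde{\la})$ in $\M_{\tilde{\la}}$ is exactly $\tilde{u}^\ell_{k'}=r\cdot u^\ell_{k'/r}$. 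Since formula~(\ref{lp}) and the Blue Rule are homogeneous of degree one in $(p,x)$, one also has $L_{rp}=rL_p$ and $\pi_{L_{rp}}(rx)=r\pi_{L_p}(x)$, from which
\[
\pi_{\tilde{\la}}(\tilde{u}^n_{k'})=\pi_{\tilde{\la}}(\tilde{u}^{n-1}_{k'})\iff \pi_{\la}(u^n_{k'/r})=\pi_{\la}(u^{n-1}_{k'/r}).
\]
Applying Corollary~\ref{P} to $\tilde{\na}$ and dividing by $r$ therefore gives, for every $r$ divisible enough,
\[
\ka(\na)=\frac{m(\tilde{\na})}{r}=\min\Bigl\{k\in\tfrac{1}{r}\N\;:\;\pi_{\la}(u^n_k)=\pi_{\la}(u^{n-1}_k)\Bigr\}.
\]

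To pass from rationals to reals I would note that $u^\ell_k$ depends affinely on $k$ and that the tropical projection is piecewise affine, so the set $S=\{k\in\R^+:\pi_{\la}(u^n_k)=\pi_{\la}(u^{n-1}_k)\}$ is closed. For any $k\in\Q_{>0}$, choosing $r$ divisible enough that both $rk$ and $r\ka(\na)$ lie in $\N$ and applying the identity above yields $k\in S\iff rk\gsl m(\tilde{\na})=r\ka(\na)\iff k\gsl\ka(\na)$, so $S\cap\Q_{>0}=[\ka(\na),\infty)\cap\Q_{>0}$. Density of $\Q_{>0}$ in $\R^+$ combined with the closedness of $S$ forces $S=[\ka(\na),\infty)$, whence $\min S=\ka(\na)$.

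The main obstacle will be the careful bookkeeping of the two independent factors of $r$ in the first step: one entering through the derivation $\tilde{\na}_0=r\na_0$, the other through the valuation $\tilde{v}=rv$. Both must conspire so that \emph{every} coordinate of $u^\ell_k$ — the Poincar\'e block $-jk$ as well as the valuation tail $v_{\la}(\na_0^{\ell+1}e_1),\ldots,v_{\la}(\na_0^n e_n)$ — rescales by the same factor $r$. Once this uniform scaling is secured, the homogeneity of the Blue Rule together with Corollary~\ref{P} takes care of everything else.
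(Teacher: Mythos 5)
Your proposal follows essentially the same route as the paper's proof: ramify the variable, observe that the valuated matroid and the lattice points of the G\'erard-Levelt membrane both rescale by the ramification degree, apply Corollary~\ref{P} to the pulled-back connection, divide by that degree, and let the degree range over all sufficiently divisible integers. You are in fact more explicit than the paper on two points it leaves implicit --- the homogeneity $\pi_{L_{rp}}(rx)=r\pi_{L_p}(x)$ and the final passage from $\frac{1}{r}\N$ to $\R^+$ via piecewise-affineness and density --- so the match is complete.
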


\begin{proof}
The Katz rank is the minimum Poincar\'e rank of the
connection $\na_H$ induced on the pure algebraic extension
$H=K[T]/(T^N-z)$ of $K$ with $N=\lcm(1,2,\ldots,n)$ (see {\it
  e.g.} \cite{Cor6}). If we put $\zeta$ 
for the class of $T$, then
$\mat((\na_H)_{\zeta\frac{d}{d\zeta}},(e\otimes
1))=N\mat(\na_{z\frac{d}{dz}},(e))$. Thus if $X(z)$
satisfies $z\frac{d}{dz}X(z)=A(z)X(z)$ the system satisfied by 
$Y(\zeta)=X(\zeta^N)$ is
$$\zeta\frac{d}{d\zeta}Y(\zeta)=NA(\zeta^N)Y(\zeta).$$ 
Put $\tilde{A}(\zeta)=NA(\zeta^N)$. The sequence
$(\tilde{A}_k)_{k\in\N}$ defined by relation (\ref{iter}) of
iterated $\zeta\frac{d}{d\zeta}$-derivatives of $\tilde{A}$ satisfies 
$$\tilde{A}_k(\zeta)=N^kA_k(\zeta^N).$$
Let $q$ be the valuated matroid defined by
$q(\omega)=w(\det\tilde{M}(\zeta)_{\omega})$, for any $n$-subset $\omega$ of indices of the columns of
$\tilde{M}(\zeta)$ with respect to the $\zeta$-adic valuation $w$. By construction we have
\begin{eqnarray*}
q(\omega)=w(\det\tilde{M}(\zeta)_{\omega})&=&w(\det
M(\zeta^N)_{\omega})\\
&=&w((\det M(\zeta)_{\omega})^N)\\
&=&Nw(\det M(\zeta)_{\omega})\\
&=&Np(\omega).
\end{eqnarray*}
The lattice $N_H=\sum_{i=1}^m \h_H \zeta^{-u_i}v_i\otimes 1$ has
tropical representation in $L_q$ as the projection of the point $u\in
\Z^m$ with respect to the matroid $q=Np$. 
By corollary~\ref{P},  we have 
$m(\na_H)=\min\{k\in \N\tq \pi_{N_H}(u^{n}_k)=\pi_{N_H}(u^{n-1}_k)\}$.
On the other hand, $\ka(\na)=\frac{1}{N}m(\na_H)$ holds.
Therefore, we get 
$$\ka(\na)=\min\{k\in\frac{1}{N}\N\tq \pi_{\la}(u^{n}_k)=\pi_{\la}(u^{n-1}_k)\}.$$
This formula holds for {\it any} extension $H'$ of degree divisible by
the denominator $s$ of $\ka(\na)$, hence the result also holds in the
limit, yielding the claimed result.
\end{proof}

\begin{ex}[Pfl\"ugel-Barkatou]
Let $dX/dz=AX$ with
$$
A=\left(\begin{matrix}
-5z^{-2} &5z^{-1} &-2z^{-1} &-9z^{-2} \\
5z^{-3}  &3z^{-2}  &2z^{-2} &-4z^{-2} \\
4z^{-1} &-5z^{-1} &-5z^{-2} & 2 \\
\frac{2-2z}{z^{3}} &-5z^{-1} &3z^{-2} &-6z^{-2}
\end{matrix}
\right).
$$
\begin{itemize}
\item[]$u^{n}_k=(0,\ldots,-3k,-4k-4k-4k-4k)$
\item[]$u^{n-1}_k=(0,\dots,-3k,-6,-5,-5,-6)$.
\end{itemize}
$$\textrm{One gets }\pi(u^{n}_k)=\pi(u^{n-1}_k)\iff
k\gsl\frac{3}{2},\textrm{ therefore }m(\na)=2\textrm{ but actually
}\kappa(\na)=\frac{3}{2}.$$
\end{ex}

\section{A projection algorithm on a tropical linear space}
\label{proj}
The Blue and Red rules from \cite{JSY} recalled in section \ref{ltrop} have
unfortunately a high computational complexity, since it involves loops
over cardinality $m\choose n$ sets. In our case, it is especially
impractical since for the G\'erard-Levelt membrane, we have
$m\sim n^2$. In this section, we present an efficient algorithm,
inspired by Ardila's work on ordinary matroids~\cite{A}, to
compute the projection of a point $x\in \R^m$ onto the tropical linear
space $L_p$ attached to a valuated matroid $p$.

\subsection{Valuated matroids}

Let us recall the setup of valuated matroids, and fix the notations
that we will use. For the results listed in this section, we refer to
\cite{MuTa}, although their definition, following \cite{DT}, comes
with the opposite sign. Let $E$ be a finite set, and a map
$\appto{p}{2^E}{\TT=\R\union{\infty}}$. Let $\B=\{B\subset E \tq p(B)\neq \infty\}$.
The pair $(E,p)$ is a {\it valuated matroid} if $\B\neq \emptyset$ and 
for $B,B'\in \B$ and $u\in B\backslash B'$ 
there exists $v\in B'\backslash B$ such that
$$p(B)+p(B')\gsl p(B\union{v}\minus{u})+p(B'\union{u}\minus{v}).$$

A subset $B\in \B$ is called a basis of $p$. In particular, $\B$
  is the set of bases of an ordinary matroid $P$ on 
$E$, that we call the {\it matroid underlying $p$}. Any vector of the form
$$X(B,v)=(p(B\union{v}\minus{u})-p(B),\: u\in E)$$ for some basis $B$ and $v\in
E\backslash B$ is a circuit of $p$.
If $X$ is a circuit of $p$, its {\it support} $$\ov{X}=\{e\in E \tq
X_e\neq \infty\}$$ is a circuit of the matroid $P$. More precisely, it
is the fundamental circuit of $B$ and $v$, that is the unique circuit
of $P$ included in $B\union{v}$. Similarly, any vector of the form
$$X^*(B,v)=(p(B\union{u}\minus{v})-p(B),\: u\in E)$$ for some basis $B$ and
$v\in B$ is thus a cocircuit of $p$.

Some important features of circuits and cocircuits of $p$ are in fact
encoded in the underlying matroid $P$. 
For any circuit $C$ of $P$, the set of circuits of $p$ that have
  $C$ as support is of the form $$X+\alpha(1,\ldots,1)\textrm{ for
  }\alpha\in\R.$$ Conversely, for any circuit $X$ of $p$, $X+\alpha(1,\ldots,1)$
  for $\alpha\in\R$ is a circuit of $p$. The same result applies to
  cocircuits. Recall the following result. 
\begin{lemm}
Any circuit (resp. cocircuit) of $P$ containing $v\in E$ can be represented as
the fundamental circuit (resp. cocircuit) of a basis $B$ such that
$v\notin B$ (resp. $v\in B$).
\end{lemm}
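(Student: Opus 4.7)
The plan is to prove the circuit statement directly from the matroid axioms of $P$, and then deduce the cocircuit statement by passing to the dual matroid $P^*$.

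For the circuit case, let $C$ be a circuit of $P$ with $v\in C$. Since $C$ is a minimal dependent set, $C\setminus\{v\}$ is independent in $P$, and I can extend it to a basis $B$ of $P$ by the augmentation axiom. Then $v\notin B$, for otherwise $C\subseteq B$ would contradict the independence of $B$. It remains to show that $C$ is the unique circuit contained in $B\cup\{v\}$: this circuit will by definition be the fundamental circuit of $v$ with respect to $B$, and since $C$ is itself contained in $B\cup\{v\}$, $C$ will be exhibited as the desired fundamental circuit. Any circuit contained in $B\cup\{v\}$ must contain $v$, because $B$ is independent. If two distinct circuits $C_1,C_2\subseteq B\cup\{v\}$ existed, the circuit elimination axiom applied at $v$ would furnish a circuit contained in $(C_1\cup C_2)\setminus\{v\}\subseteq B$, contradicting the independence of $B$.

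For cocircuits, I would invoke matroid duality: cocircuits of $P$ are precisely the circuits of the dual matroid $P^*$, and a subset $B\subseteq E$ is a basis of $P$ if and only if $E\setminus B$ is a basis of $P^*$. Under this correspondence, the fundamental cocircuit of an element $v\in B$ with respect to the basis $B$ in $P$ is identified with the fundamental circuit of $v$ with respect to the basis $E\setminus B$ in $P^*$. Applying the already established circuit statement to $P^*$ and the element $v$ yields a basis $B^*$ of $P^*$ with $v\notin B^*$ such that the given cocircuit of $P$ is the fundamental circuit of $v$ with respect to $B^*$ in $P^*$. Setting $B=E\setminus B^*$ then produces the required basis of $P$ containing $v$.

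The only nontrivial step is the uniqueness of the circuit contained in $B\cup\{v\}$, which rests on the circuit elimination axiom. Everything else is mechanical: augmentation of an independent set to a basis, and the standard dictionary between a matroid and its dual.
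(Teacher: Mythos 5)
Your proof is correct and follows essentially the same route as the paper: extend the independent set $C\setminus\{v\}$ to a basis $B$, observe $C\subseteq B\cup\{v\}$ so that $C$ is the fundamental circuit, and pass to the dual matroid for the cocircuit case. You additionally spell out the uniqueness of the circuit in $B\cup\{v\}$ via circuit elimination, which the paper merely asserts; this is a welcome extra detail, not a divergence.
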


\begin{proof}
Let $C$ be a circuit of $P$. By definition, for any $v\in C$, the set
$C\minus{v}$ is contained in some basis $B$. Therefore $C\subset
B\union{v}$ holds. But there is a unique circuit satisfying this
condition. Since the cocircuits are the circuits of the dual matroid,
the same result holds.
\end{proof}

In what follows, we will speak by abuse of notation of the fundamental
(co-)circuit of $B$ and $v$ for a valuated matroid $p$. This is
harmless as long as the results that we state are invariant up to the
addition of a constant. If we need to specify a representative, we
will often use the only one with non-negative coordinates and with
minimum coordinate equal to 0, or with some fixed value at some
element of $E$.

For any $x\in\R^m$, the map
$$p_x(B)=p(B)-\sum_{b\in B}x_b$$
extended to all $2^E$ by $p_x(A)=\infty$ for $A\notin \B$ defines
another valuated matroid on $E$.
\begin{lemm}
If $X$ is any circuit of $p$, then $X+x$ is a circuit of $p_x$, and if
$X^*$ is a cocircuit of $p$, then $X^*-x$ is a cocircuit of $p_x$.
\end{lemm}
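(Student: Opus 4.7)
The plan is to reduce the claim to a direct unwinding of the definitions on fundamental (co)circuits. By the result recalled immediately before the lemma, any circuit of $p$ is of the form $X(B,v) + \alpha \cdot (1,\ldots,1)$ for some basis $B$, some $v \in E \setminus B$, and some $\alpha \in \R$, and addition by a scalar multiple of $(1,\ldots,1)$ preserves the property of being a circuit. It therefore suffices to establish the claim for the fundamental circuits $X = X(B,v)$ and then transport it by the scalar freedom.

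First I fix such a pair $(B,v)$ and compute $X_{p_x}(B,v)_u$ coordinate by coordinate directly from the definition of $p_x$. For $u \in B$, both $B$ and $B \cup \{v\} \setminus \{u\}$ lie in $\B$, and expanding the two sums in $p_x(B \cup \{v\} \setminus \{u\}) - p_x(B)$ collapses to $X(B,v)_u + x_u - x_v$. The same identity holds trivially for $u = v$ (both sides equal $0$) and for $u \notin B \cup \{v\}$ (both sides equal $\infty$, since $B \cup \{v\} \setminus \{u\}$ then has cardinality $n+1$ and cannot be a basis). This gives
\[
X_{p_x}(B,v) = X(B,v) + x - x_v \cdot (1,\ldots,1).
\]
Since $x_v \cdot (1,\ldots,1)$ is constant and circuit representatives are defined modulo such translations, $X(B,v) + x$ is itself a circuit of $p_x$, which yields the first assertion for arbitrary $X$ after reapplying the scalar freedom.

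The cocircuit statement is obtained by the symmetric computation: for $v \in B$ and $u \notin B$, expanding $p_x(B \cup \{u\} \setminus \{v\}) - p_x(B)$ in the same way gives $X^*_{p_x}(B,v)_u = X^*(B,v)_u + x_v - x_u$, with the remaining cases $u = v$ and $u \in B \setminus \{v\}$ again reducing to $0$ or $\infty$ on both sides. Hence
\[
X^*_{p_x}(B,v) = X^*(B,v) - x + x_v \cdot (1,\ldots,1),
\]
so $X^*(B,v) - x$ represents a cocircuit of $p_x$, and the general case follows by the translation freedom for cocircuits.

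The argument is purely computational; the only care required is bookkeeping: tracking the cases where $u = v$ or $u$ lies outside the support of the fundamental (co)circuit, where both sides of the identity reduce to $0$ or $\infty$, and absorbing the constant term $\pm x_v \cdot (1,\ldots,1)$ into the translation freedom of (co)circuit representatives. There is no substantive obstacle beyond this.
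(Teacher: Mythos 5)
Your proof is correct and follows essentially the same route as the paper: a direct coordinate computation showing $X_{p_x}(B,v)=X(B,v)+x-x_v(1,\ldots,1)$ and $X^*_{p_x}(B,v)=X^*(B,v)-x+x_v(1,\ldots,1)$, followed by the translation (projectivity) freedom for (co)circuit representatives. Your extra bookkeeping of the degenerate coordinates ($u=v$ and $u$ outside the support) is a harmless refinement of the same argument.
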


\begin{proof}
By the definition of a circuit of $p$, circuits of $p_x$ have coordinates
\begin{eqnarray*}
X_x(B,v)_u&=&p_x(B\union{v}\minus{u})-p_x(B)\textrm{ for some }B\not\ni v\\
          &=& p(\union{v}\minus{u})-\sum_{b\in B\union{v}\minus{u}}x_b-p(B)+\sum_{b\in B}x_b\\
          &=&X(B,v)_u-x_v+x_u.
\end{eqnarray*}
Hence, $X_x(B,v)=X(B,v)+x-x_v(1,\ldots,1)$.
Similarly, we have
\begin{eqnarray*}
X^*_x(B,v)_u&=&p_x(B\union{u}\minus{v})-p_x(B)\\
          &=& p(\union{u}\minus{v})-\sum_{b\in B\union{u}\minus{v}}x_b-p(B)+\sum_{b\in B}x_b\\
          &=&X^*(B,v)_u+x_v-x_u.
\end{eqnarray*}
Hence, $X^*_x(B,v)=X(B,v)-x+x_v(1,\ldots,1)$. By the projectivity
property of circuits and cocircuits, the result is established.
Since the sets of bases for $p$ and $p_x$ coincide, these are indeed
the only (co)circuits of $p_x$.
\end{proof}

\subsection{An algorithm for the projection on the tight-span}

A valuated matroid $\appto{p}{{{E}\choose{n}}}{\TT}$ of
rank $n$ over a finite set $E=[m]$ induces a tropical linear space
$L_p$ defined by~(\ref{lp}).  This subspace of $\TT^m$ corresponds (up
to sign) to what Dress and Terhalle call the {\it tight span} of a valuated matroid. In
this section, we present an efficient algorithmic method to 
compute the tropical projection from $\R^m$ onto $L_p$, that
generalises results obtained by Ardila for ordinary matroids in~\cite{A}.

\begin{prop}
\label{r}
Let $p$ be a valuated matroid of rank $n$ on $[m]$, and let $u\in E$. The following
conditions are equivalent.
\begin{enumerate}[i)]
\item $u$ belongs to at least one minimal basis of $p$.\label{ri}
\item $u$ is never the unique minimum in a circuit of $p$.\label{rii}
\item $u$ is minimal in some cocircuit of $p$.\label{riii}
\end{enumerate}
\end{prop}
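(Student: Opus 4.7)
I would prove the three equivalences by establishing the cycle $(i)\Rightarrow(iii)$, $(iii)\Rightarrow(i)$, $(i)\Rightarrow(ii)$, $(ii)\Rightarrow(i)$; in each case the tool is to realise a suitable (co)circuit as the fundamental one at an appropriately chosen basis and to compare with a minimum basis via the valuated exchange axiom.

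For $(i)\Rightarrow(iii)$, let $B$ be a minimum basis containing $u$. The fundamental cocircuit $X^*(B,u)$ satisfies $X^*(B,u)_u=0$, and for every $w$ in its support, $X^*(B,u)_w = p(B\union{w}\minus{u})-p(B)\geq 0$ by minimality of $B$; hence $u$ attains the minimum of that cocircuit.

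For $(iii)\Rightarrow(i)$, pick a cocircuit in which $u$ is minimum, written by the preceding lemma as $X^*(B,u)$ with $u\in B$, so that $p(B\union{w}\minus{u})\geq p(B)$ for every $w$ in its support. Let $B^*$ be any minimum basis; if $u\in B^*$ we are done, otherwise apply the valuated exchange axiom to the pair $B,B^*$ at $u\in B\setminus B^*$ to obtain $v\in B^*\setminus B$ with $p(B)+p(B^*)\geq p(B\union{v}\minus{u})+p(B^*\union{u}\minus{v})$. Since $B\union{v}\minus{u}$ is automatically a basis, $v$ sits in the support of $X^*(B,u)$, so $p(B\union{v}\minus{u})\geq p(B)$; substitution yields $p(B^*)\geq p(B^*\union{u}\minus{v})$, and combined with the minimality of $B^*$, equality holds and $B^*\union{u}\minus{v}$ is a minimum basis containing $u$.

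The proof of $(i)\Leftrightarrow(ii)$ is parallel but uses fundamental circuits. For the contrapositive of $(ii)\Rightarrow(i)$, suppose $u$ lies in no minimum basis and pick a minimum basis $B^*$ (necessarily with $u\notin B^*$); then $X(B^*,u)_u=0$, and for every $w\neq u$ in the support, $B^*\union{u}\minus{w}$ is a basis containing $u$, hence not minimum, giving $p(B^*\union{u}\minus{w})>p(B^*)$, so $u$ is the unique minimum of $X(B^*,u)$. Conversely, for the contrapositive of $(i)\Rightarrow(ii)$, assume $u$ is the unique minimum of some circuit, represented as $X(B',u)$ with $u\notin B'$, so that $p(B'\union{u}\minus{w})>p(B')$ for every $w\neq u$ in the support. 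If $u$ belonged to a minimum basis $B$, the exchange axiom applied at $u\in B\setminus B'$ would produce $v\in B'\setminus B$ with $p(B)+p(B')\geq p(B\union{v}\minus{u})+p(B'\union{u}\minus{v})$; since $B'\union{u}\minus{v}$ is a basis, $v$ lies in the support of the fundamental circuit, so the strict inequality $p(B'\union{u}\minus{v})>p(B')$ forces $p(B\union{v}\minus{u})<p(B)$, contradicting the minimality of $B$.

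The main subtlety, shared by all four directions, is to notice that the element $v$ produced by the exchange axiom automatically lies in the support of the relevant fundamental (co)circuit, because by construction $B\union{v}\minus{u}$ (respectively $B'\union{u}\minus{v}$) is a basis, which is exactly the characterisation of membership in that support; once this is seen, the rest of the argument is essentially bookkeeping against a single minimum basis.
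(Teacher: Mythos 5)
Your proof is correct, and for half of the argument it genuinely departs from the paper's. The direction $(i)\Rightarrow(iii)$ and your contrapositive treatment of $(ii)\Rightarrow(i)$ coincide in substance with the paper's (there, the latter is stated directly: if $u$ is never a strict circuit minimum and $B$ is a minimum basis not containing $u$, the fundamental circuit of $B$ and $u$ yields an exchange $p(B\cup\{u\}\setminus\{v\})\leq p(B)$). The divergence is in how the remaining implications are closed: the paper runs the cycle $(i)\Rightarrow(iii)\Rightarrow(ii)\Rightarrow(i)$, and its proof of $(iii)\Rightarrow(ii)$ invokes the orthogonality theorem for circuits and cocircuits of a valuated matroid (Murota--Tamura, Th.~3.11) --- if $u$ were simultaneously the strict minimum of a circuit $C$ and a minimum of a cocircuit $C^*$, then $C+C^*$ would have a unique minimiser, which orthogonality forbids. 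You instead prove $(iii)\Rightarrow(i)$ and $(i)\Rightarrow(ii)$ directly from the valuated basis-exchange axiom, playing the given basis against a minimum basis; the one delicate point, which you correctly identify and justify, is that the element produced by the exchange axiom automatically lies in the support of the relevant fundamental (co)circuit because the exchanged set must be a basis for the exchange inequality to have a finite right-hand side. Your route is more self-contained (only the defining axiom is used, no appeal to circuit/cocircuit orthogonality) and makes the circuit/cocircuit symmetry of the statement explicit, at the cost of invoking the exchange axiom twice; the paper's route is shorter once the orthogonality theorem is granted.
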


\begin{proof}
$\ref{ri})\Rightarrow\ref{riii})$: Assume that $B$ is a minimal basis
containing $u$. Let $C^*=X^*(B,u)$ be the fundamental
cocircuit of $B$ and $u$. By definition, we
have $$C^*_v=p(B\union{v}\minus{u})-p(B)\gsl 0=C^*_u.$$
That is, $u$ is minimal in the cocircuit of $B$ and $u$.

$\ref{riii})\Rightarrow\ref{rii})$: suppose that $u$ in the unique
minimum for $p$ on a circuit $C$. Assume that $C^*$ is a cocircuit of $p$
where $u$ is minimal. By assumption, we have
$$C_u<C_{u'}\textrm{ and }C^*_u\lsl C^*_{u'}\textrm{ for }u'\neq u.$$
Accordingly, $C+C^*$ has a unique minimum at $u$. By orthogonality of
circuits and cocircuits (\cite{MuTa}, th. 3.11, p. 204), the set of indices that
minimise $C+C^*$ cannot have cardinality one. Therefore, the
contradiction is established.

Let us finally prove $\ref{rii})\Rightarrow \ref{ri})$: consider a minimum
basis $B$. If $u\notin B$, let $C=X(B,u)$ be the
circuit generated by $B$ and $u$. By assumption, the minimum in $C$ is
attained at $v\neq u$. The support of $C$ is equal to
the fundamental circuit of $B$ and $u$ for the ordinary matroid
underlying $p$. Therefore, $B\union{u}\minus{v}$ is a basis of $p$ and
$$p(B\union{u}\minus{v})-p(B)\lsl
p(B\union{u'}\minus{v})-p(B)\textrm{ for }u'\in C.$$
Putting $v=u'$ we get
$p(B\union{u}\minus{v})\lsl p(B)$. Since we assumed that $B$ was
minimal, we get $\ref{ri})$.
\end{proof}

Therefore we get the following characterisation of the (finite part of
the) tropical linear space $L_p$.

\begin{prop}
\label{ar}
Let $x\in\R^m$, and let $p$ be a valuated matroid of rank $n$ on $[m]$. The following are equivalent.
\begin{enumerate}[i)]
\item $x\in L_p$.\label{ari}
\item Every element of $E$ belongs at least to one $x$-minimal basis
  of $p$.\label{arii}
\item Every circuit of $p$ contains at least 2 $x$-minimal elements.\label{ariii}
\item Every element of $E$ is $x$-minimal in at least one cocircuit of $p$.\label{ariv}
\end{enumerate}
\end{prop}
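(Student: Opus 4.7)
My plan is to reduce the four-way equivalence to Proposition \ref{r} by passing to the shifted valuated matroid $p_x$ introduced above, where $p_x(B)=p(B)-\sum_{b\in B}x_b$. The key observations are that $p_x$ has the same underlying matroid as $p$ (so the bases, circuits, and cocircuits agree as supports), that the minimal bases of $p_x$ are precisely the $x$-minimal bases of $p$, and that by the preceding lemma the circuits (resp.\ cocircuits) of $p_x$ are obtained from those of $p$ by translation by $+x$ (resp.\ $-x$) modulo an irrelevant additive constant. Consequently, the element at which the minimum of a circuit or cocircuit of $p_x$ is attained is precisely the $x$-minimal element of the corresponding (co)circuit of $p$.

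First I would establish \ref{ari}) $\Leftrightarrow$ \ref{ariii}) globally. Rewriting the defining condition (\ref{lp}) of $L_p$: for each $\tau\in\binom{[m]}{n+1}$, the quantity $p(\tau\setminus\{\tau_i\})+x_{\tau_i}$ differs from $p_x(\tau\setminus\{\tau_i\})$ by the constant $\sum_{j\in\tau}x_j$, independent of $i$, so the phrase ``the minimum over $i$ is attained at least twice'' becomes the statement that the fundamental circuit of $p_x$ attached to $\tau$ has its minimum attained at least twice. Quantifying over $\tau$, this is exactly the condition that every circuit of $p_x$ has its minimum attained at least twice, which by the dictionary of the first paragraph is precisely \ref{ariii}).

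Next I would apply Proposition \ref{r} to $p_x$ for every $u\in E$. Its three equivalent conditions \ref{ri})--\ref{riii}) translate, via the same dictionary, into: $u$ lies in some $x$-minimal basis of $p$; $u$ is never the unique $x$-minimal element of a circuit of $p$; and $u$ is $x$-minimal in some cocircuit of $p$. Taking the conjunction over $u\in E$ yields exactly \ref{arii}), a restatement of \ref{ariii}) (hence of \ref{ari})), and \ref{ariv}) respectively, closing the cycle of equivalences. The main obstacle I anticipate is sign bookkeeping in the dictionary---particularly the asymmetry between the $+x$ shift for circuits and the $-x$ shift for cocircuits---and ensuring that ``$x$-minimal'' is read consistently across the three settings of bases, circuits, and cocircuits; once this dictionary is made explicit, the remainder of the argument is a transparent pointwise application of Proposition \ref{r} together with the global reformulation of $x\in L_p$ in terms of $p_x$.
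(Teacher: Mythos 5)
Your proposal is correct and follows the same route as the paper: the equivalence i) $\Leftrightarrow$ iii) comes from the definition of $L_p$, and the remaining equivalences follow by applying Proposition~\ref{r} to the shifted valuated matroid $p_x$, using the preceding lemma to translate (co)circuits of $p_x$ back to $x$-minimality statements for $p$. You simply make explicit the dictionary that the paper leaves implicit.
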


\begin{proof}
$\ref{ari})$ and $\ref{ariii})$ are equivalent by the definition of $L_p$
({\it cf.} \cite{JSY}). The remaining assertions are obtained by applying
proposition~\ref{r} to the valuated matroid $p_x$.
\end{proof}

Note that the previous characterisation does not apply when $x$ has an
infinite coordinate, since $p_x$ is then no longer a valuated
matroid. However, $x_u=\infty$ happens only when $u$ does not belong
to any basis. 

The computation of $\pi_{L_p}(x)$ can be performed independently for
every element of the vector $x$. For a given $u\in E$, there is a (unique)
normalisation of a circuit $C$ of $p$ containing $u$ such that $C^x_u=x_u$.

\begin{prop}
If $u\in E$ violates any one of the three conditions of
proposition~\ref{r} for the matroid $p_x$, then $u$ satisfies them for
the modified vector $x'=(x_1,\ldots,x'_u,\ldots,x_m)$ with
$$x'_u=\max_{u\in C}\min_{e\in C\minus{u}} C^x_e,$$ where 
all the circuits are normalised so that $C^x_u=x_u$.
Moreover, the conditions of
proposition~\ref{r} are not satisfied at $u$ for
$x''=(x_1,\ldots,x'_u-\varepsilon,\ldots,x_m)$ with $\varepsilon>0$.
\end{prop}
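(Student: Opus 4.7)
The plan is to apply condition \emph{ii)} of Proposition~\ref{r} to the modified matroid $p_{x'}$ (respectively $p_{x''}$) at the point $u$, and to track how circuits transform when the single coordinate $x_u$ is changed. Since (i), (ii), (iii) are equivalent, it is enough to work with (ii).

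First I would sort out the normalisation. For any circuit $C$ of the ordinary matroid underlying $p$ that contains $u$, the circuits of $p_y$ with support $C$ form a one-parameter family differing by a constant on $C$; and by the preceding lemma, passing from $p$ to $p_y$ amounts to adding the restriction of $y$ to $C$ to the entries of a circuit of $p$ (up to an overall additive constant). Since $x'$ and $x$ differ only at $u$, I may choose the circuit of $p_{x'}$ normalised by $(C^{x'})_u = x'_u$; a short computation then shows that $(C^{x'})_e = (C^x)_e$ for every $e \in C\minus{u}$, independently of the value of $x'_u$. The same applies with $x''$ in place of $x'$.

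Plugging this into condition \emph{ii)}, the requirement that $u$ is not the unique minimum in any circuit of $p_{x'}$ reads
$$x'_u \gsl \min_{e \in C\minus{u}} (C^x)_e \quad \textrm{for every circuit } C \textrm{ of } P \textrm{ with } u\in C,$$
whose smallest solution is $x'_u = \max_{u\in C}\min_{e\in C\minus{u}} C^x_e$, which is the claimed formula. For the \emph{moreover} statement, I would pick a circuit $C^*$ achieving the outer maximum; with $x''_u = x'_u - \varepsilon$ one has $(C^{x''})_u < \min_{e \in C^*\minus{u}} (C^{x''})_e$, so $u$ is the unique minimum in the normalised circuit $C^{x''}$ supported on $C^*$, violating \emph{ii)} for $p_{x''}$. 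I expect the main obstacle to be the normalisation bookkeeping of the first step: verifying that the projective freedom in defining a circuit of $p_y$ precisely absorbs the change of $x_u$ and leaves the remaining entries fixed. Once that is in hand, the rest is an elementary min-max computation.
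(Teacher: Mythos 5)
Your proposal is correct and takes essentially the same route as the paper's own proof: normalise each circuit through $u$ so that its $u$-entry equals the current $u$-coordinate (which leaves the entries at $e\neq u$ unchanged when $x$ is modified only at $u$), reduce condition ii) of proposition~\ref{r} to the system of inequalities $x'_u\gsl \min_{e\in C\minus{u}}C^x_e$ over circuits containing $u$, and identify the stated maximum as its least solution, the maximising circuit witnessing the failure for $x''$. The normalisation bookkeeping you flag is exactly what the paper checks, and it follows at once from the lemma $X_x(B,v)=X(B,v)+x-x_v(1,\ldots,1)$.
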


\begin{proof}
By assumption, $u$ is the unique $x$-minimum over some circuit $\ti{C}$
containing $u$. The support of such a circuit $C$ can be defined as $\ov{C}=X(B,u)$ the
fundamental circuit of $u$ and a basis $B\not\ni u$. The $x$-value at
$e\in \ov{C}$ of the circuit $C$ is of the
form $$C^x_e=p(B\union{u}\minus{e})-p(B)+x_e+ \alpha\textrm{ for some
  constant }\alpha\in\R,$$ so we may choose as representative of any
circuit $C$ containing $u$ the only one such that
$C^x_u=x_u$, namely the one defined by
$C^x_e=p(B\union{u}\minus{e})-p(B)+x_e$.

Then by assumption $$x'_u\gsl \min_{e\in C\minus{u}}\ti{C}^{x}_e> \ti{C}^{x}_u=x_u,$$
and for any circuit $C\ni u$, we have 
$$C^{x'}_e=\left\{\begin{array}{l}C^{x'}_e\textrm{ if }e\neq u\\
                                 \\
                                 C^{x'}_u=x'_u\textrm{ if
                                 }e=u\end{array}\right.$$
Therefore $$C^{x'}_u\gsl \min_{e\in C\minus{u}}C^x_e=\min_{e\in C\minus{u}}C^{x'}_e$$
so $u$ cannot be the unique $x'$-minimum of any circuit containing $u$.
On the other hand, there exists a circuit $C$
containing $u$ such that $\min_{e\in C\minus{u}}C^{x}_e=x'_u$. Putting
$x''_u=x'_u-\varepsilon$ for any $\varepsilon>0$, then $u$ will be the
$x''$-unique minimum over the circuit $C$. Thus $x'$ is the smallest
vector that corrects the value of $x$ at $u$.
\end{proof}

\begin{prop}
Let $x\in\R^m$ and let $B$ be an $x$-minimal basis of $p$. Then
$\omega=\pi_{L_p}(x)$ can be computed in the following way.
$$\omega_i=\left\{\begin{array}{l}x_i\textrm{ if }i\in B\\
\\
\min_{u\neq i}(p(B\union{i}\minus{u})-p(B)+x_u)\textrm{ otherwise.}\end{array}\right.$$
\end{prop}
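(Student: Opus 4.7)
The plan is to show that $\omega$, defined by the stated formula, coincides with $\pi_{L_p}(x)$ by verifying three properties: $\omega\gsl x$ coordinatewise, $\omega\in L_p$, and any $\omega'\in L_p$ with $\omega'\gsl x$ satisfies $\omega'\gsl\omega$ coordinatewise. Together with~(\ref{projdef}) these force $\omega=\pi_{L_p}(x)$. The first property is immediate: for $i\in B$ it is trivial, and for $i\notin B$ the $x$-minimality of $B$ gives $p_x(B\union{i}\minus{u})\gsl p_x(B)$ whenever $B\union{i}\minus{u}$ is a basis, which after cancelling the shared $x$-terms rearranges to $p(B\union{i}\minus{u})-p(B)+x_u\gsl x_i$; taking the minimum over $u$ yields $\omega_i\gsl x_i$.

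The critical step is $\omega\in L_p$, for which I would invoke proposition~\ref{ar}(ii) and exhibit, for every $i\in E$, an $\omega$-minimal basis of $p$ containing $i$. The central technical claim is that $B$ itself remains $\omega$-minimal: once this is granted, the case $i\in B$ is automatic, and for $i\notin B$ letting $u^*$ realise the minimum in the formula gives, by a direct calculation, $p_\omega(B\union{i}\minus{u^*})=p_\omega(B)$, so $B^*:=B\union{i}\minus{u^*}$ is also $\omega$-minimal and contains $i$. I would prove the $\omega$-minimality of $B$ by induction on $\abs{B\triangle B'}$. The single-exchange case $B'=B\union{i}\minus{u}$ reduces precisely to the defining inequality $\omega_i\lsl p(B\union{i}\minus{u})-p(B)+x_u$. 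For the inductive step, pick $i\in B'\backslash B$; applying the valuated exchange axiom to the pair $(B',B)$ with the distinguished element $i$ provides $u\in B\backslash B'$ such that $p(B)+p(B')\gsl p(B\union{i}\minus{u})+p(B'\union{u}\minus{i})$, and since the multisets $B\cup B'$ and $(B\union{i}\minus{u})\cup(B'\union{u}\minus{i})$ agree the same inequality holds for $p_x$. Combining the single-exchange estimate for $B\union{i}\minus{u}$ with the inductive hypothesis applied to $B'\union{u}\minus{i}$, whose symmetric difference with $B$ is strictly smaller, yields $p_\omega(B)\lsl p_\omega(B')$.

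For minimality, let $\omega'\in L_p$ with $\omega'\gsl x$. For $i\in B$ one has $\omega'_i\gsl x_i=\omega_i$. For $i\notin B$, consider the fundamental circuit $\ti{C}$ of $p$ supported in $B\union{i}$, normalised so that $\ti{C}_i=0$ and $\ti{C}_u=p(B\union{i}\minus{u})-p(B)$ for $u\in B\cap\ov{\ti{C}}$. Proposition~\ref{ar}(iii) forces $\min_e(\ti{C}_e+\omega'_e)$ to be attained at least twice. At $e=i$ this value is $\omega'_i$; at any $u\in B\cap\ov{\ti{C}}$ it is at least $p(B\union{i}\minus{u})-p(B)+x_u\gsl\omega_i$. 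Were $\omega'_i<\omega_i$, then $i$ would be the strict unique minimiser, a contradiction, so $\omega'_i\gsl\omega_i$. Applying this to $\omega'=\pi_{L_p}(x)$ gives $\pi_{L_p}(x)\gsl\omega$, while $\omega\in L_p$ and $\omega\gsl x$ yield $\pi_{L_p}(x)\lsl\omega$ via~(\ref{projdef}). The main obstacle is the $\omega$-minimality of $B$ in the multi-exchange case, where the delicate part is orchestrating the valuated exchange axiom with the bookkeeping of $x$-shifts so that the induction on $\abs{B\triangle B'}$ goes through cleanly.
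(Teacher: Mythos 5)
Your proof is correct, but it takes a genuinely different route from the paper's. The paper argues coordinate by coordinate: it leans on the preceding proposition, which identifies the corrected value at $i$ as $\max_{C\ni i}\min_{e\in C\minus{i}}C^x_e$, and then shows that the fundamental circuit $X(B,i)$ of the $x$-minimal basis attains that maximum, via one application of circuit exchange in the underlying matroid together with the minimality of $B$. You instead verify directly the three properties that characterise the nearest-point projection from~(\ref{projdef}): $\omega\gsl x$, $\omega\in L_p$, and $\omega\lsl\omega'$ for every $\omega'\in L_p$ with $\omega'\gsl x$. Your key lemma --- that $B$ remains $\omega$-minimal, proved by induction on $\abs{B\triangle B'}$ using the valuated exchange axiom (which also guarantees that both exchanged sets are bases, so the induction hypothesis applies) --- is exactly the local-to-global optimality argument behind the greedy algorithm of Dress--Wenzel, and it lets you exhibit an $\omega$-minimal basis through each $i\notin B$ explicitly, namely $B\union{i}\minus{u^*}$. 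What each approach buys: the paper's proof is shorter given the machinery already set up (and it implicitly relies on the claim that the coordinates of $\pi_{L_p}(x)$ can be corrected independently); yours is self-contained modulo Proposition~\ref{ar} and the definition~(\ref{projdef}), and the lower-bound step via the fundamental circuit and Proposition~\ref{ar}(\ref{ariii}) cleanly replaces the paper's appeal to the $\max\min$ characterisation. The only caveat, which the paper shares, is the degenerate case of an element belonging to no basis, where $\omega_i=\infty$ and Proposition~\ref{ar} does not directly apply.
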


\begin{proof}
If $i \in B$ holds, then $i$ is $x$-minimal in the fundamental cocircuit
$X^*(B,i)$. Therefore all conditions of proposition~\ref{r} apply to $i$.
Otherwise, let $X(B,i)$ be the fundamental circuit of $B$ and $i$,
normalised so that $X(B,i)_i=x_i$. We have to prove that 
$$\min_{e\neq i}(X(B,i)_e+x_e)=\max_{i\in C}\min_{e\in C\minus{i}}
C^x_e.$$ By construction, $\lsl$ holds. Actually, it is sufficient to
prove that $\min_{e\neq i}(X(B,i)_e)\gsl \min_{e\in C\minus{i}}C_e$
for any circuit $C$ containing $i$.

Say that $\min_{e\neq i}X(B,i)_e=X(B,i)_u$. For any circuit
$C$ containing $i$, there exists $v\in C\minus{i}$ such that
$B\union{v}\minus{u}$ is a basis. Since $i\notin B\union{v}\minus{u}$,
the circuit $\ti{C}=X(B\union{v}\minus{u},i)$ containing $i$ is well
defined, and $\ti{C}_v\gsl \min_{e\in\ti{C}\minus{i}}\ti{C}_e$ holds.
By definition, we have 
\begin{eqnarray*}
\ti{C}_v&=&p((B\union{v}\minus{u})\union{i}\minus{v})-p(B\union{v}\minus{u})\\
        &=&p(B\union{i}\minus{u})-p(B\union{v}\minus{u})\\
        &\lsl&p(B\union{i}\minus{u})-p(B)\textrm{ since }B\textrm{ is
          minimal}\\
        &\lsl&X(B,i)_u.
\end{eqnarray*}
Accordingly, we get $X(B,i)_u\gsl \ti{C}_v\gsl \min_{e\in\ti{C}\minus{i}}\ti{C}_e$.
\end{proof}

This result implies the following efficient method to compute the
tropical projection $\pi_{L_p}(x)$ for $x\in\R^m$. 
\begin{enumerate}
\item Compute a minimal basis $B$ of $p_x$. This can be performed by
  the greedy algorithm described in~\cite{DW}.
\item For $i\in E\backslash B$, compute $C=X(B,i)$. To do this, it
  suffices to compute the fundamental circuit of $B$ and $i$ for the
  underlying matroid $P$.
\item Find the minimum element of $C+x$ outside $i$; note that there are at most $n$ non
  infinite elements to consider.
\end{enumerate}


\end{document}